\newtheorem{theorem}{Theorem}[section]
\newtheorem{lemma}[theorem]{Lemma}
\numberwithin{equation}{section}
\title{Refined Asymptotics in the Online Selection of an Increasing Subsequence}
\author{Amirlan Seksenbayev, Queen Mary University of London}
\begin{document}

\maketitle

\begin{abstract}
\noindent
Let $v_n$ be the maximum expected length of an increasing subsequence, which can be selected by an online nonanticipating policy from a random sample of size $n$.
Refining known estimates, we obtain an asymptotic expansion of $v_n$ up to a $O(1)$ term. The method we use is based on detailed analysis of the dynamic programming equation, and is also applicable to the online selection problem with observations occurring at times of a Poisson process.
\end{abstract}

\section{Introduction}
In the {\it online} increasing subsequence problem 
the objective  is
 to maximise the expected length of increasing subsequence selected by a non-anticipating policy from  a  sequence 
 of random items  $X_1, \dots,X_n$ sampled  independently from  known continuous distribution $F$.  
The online constraint requires to  accept or reject  $X_i$ at  time $i$ when the item is observed, with the decision on the item becoming immediately terminal.
Samuels and Steele  \cite{F} introduced the problem and proved that the maximum expected length $v_n$ has   asymptotics
\begin{align} \label{100}
	v_n \sim \sqrt{2n} \qquad \text{as } n \to \infty.
\end{align}
To compare,   the asymptotic expected length  of the {\it longest} increasing subsequence  is $2\sqrt{n}$, as is well-known in the context of the Ulam-Hammersley problem on random permutations \cite{Romik}. 
The difference in factors  reflects the advantage of a prophet with complete overview of the random sequence over a rational but nonclairvoyant  gambler learning the sequence and making decisions in real time.

The optimal value $v_n$ does not depend on the distribution $F$, and  as in the previous work we will further assume $F$ to be the uniform distribution on the unit interval. 
The tightest known bounds on $v_n$ are 
\begin{align} \label{2}
	\sqrt{2n} - 2 \log{n} -2 \leq v_n < \sqrt{2n}.
\end{align}
The upper bound appeared in \cite{G} in the context of a sequential knapsack problem and was generalised in \cite{ H}
for the problem with random sample size. The lower bound appeared recently in Arlotto et al  \cite{A}.
To derive (\ref{100}) Samuels and Steele \cite{F} employed a stationary policy which accepts the $i$th item each time $X_i$ exceeds the previous selection by no more than $\sqrt{2/n}$;
this policy, however, falls by $O(n^{1/4})$ below the upper bound (\ref{2}). To narrow the  gap Arlotto et al \cite{A} assessed a more involved state-dependent policy, which has the size of acceptance window  for $X_i$ both dependent
 on $i$ and the last selection so far. 
Based on extensive numerical simulation Arlotto et al \cite{A} also  suggested that the optimality gap (\ref{2}) can be further tightened.

In this paper we settle two conjectures from \cite{A} by showing that the maximum expected length has asymptotic expansion
\begin{align}
		v_n = \sqrt{2n} - \frac{1}{12} \log{n} + O(1) \ \ \ \ \text{ as } n \to \infty,
\end{align}
and that the state-dependent policy constructed in \cite{A}   is within $O(1)$ from the optimum. A similar expansion with the second term $(\log n)/6$ was obtained in the related  problem of online selection from random permutation of $n$ integers \cite{Peng}. 
The difference in logarithmic terms can be interpreted as advantage of a half-prophet, who knows the unordered sample values 
$\{X_1,\dots,X_n\}$  in advance but not the succession in which the items are revealed in the course of observation.

The discrete-time selection problem has a continuous-time counterpart, where observations occur at times of a Poisson process within given time horizon \cite{ E, G, D,  F}. 
Although   the Poisson model has an additional source of risk implied by  the unknown number of observations, its analysis is easier 
because the optimal value function depends on the current state and time only through the expected number of remaining items  exceeding the last selection.  
As stressed in \cite{B} the deep relation between fixed-$n$ and poissonised  sequential decision models is yet to be understood, and in this paper we will treat them in parallel.

\section{Selection from Poisson-paced observations}

\subsection{Setting and auxiliary results}

Let $\Pi$ be a random scatter of points in $[0,\infty) \times [0,1]$ spread
according to a unit rate planar Poisson point process. 
The event $(s,x)\in \Pi$, that $\Pi$ has an atom at $(s,x)$, is interpreted as item with value $x$ observed at time $s$.
A  sequence of atoms $(s_1,x_1),\dots, (s_k,x_k)$ is said to be increasing if $s_1<\cdots<s_k$ and $x_1<\cdots<x_k$.
We think of   the configuration of points in finite rectangle, $\Pi_{|[0,s]\times[0,1]}$, as information available to the decision maker at time $s\geq 0$.
Let $u(t)$ be  the maximum expected length of increasing sequence which can be selected from $\Pi$ within time horizon $t$ by a online policy adapted to the natural  filtration of the process $(\Pi_{|[0,s]\times[0,1]},~s\geq 0)$.   
We refer to \cite{ E, G, D,  F} for the formal definition of admissible policies in terms of an increasing sequence of stopping times.

The optimal policy belongs to the following class of {\it self-similar} policies. 
Let $\delta: {\mathbb R}_+\to[0,1]$ be a {\it threshold function} defining for every $t\geq 0$ the acceptance window 
$\delta(t)$ for a virtual observation at time $0$ in the selection
problem with horizon $t$.
Define  a policy $\tau$  recursively by the prescription: 
item $x$ observed at time $s\leq t$ is accepted if and only if 
\begin{equation}\label{delta}
0<\frac{x-z}{1-z}\leq \delta((t-s)(1-z)),
\end{equation} 
where $z$ is the biggest item chosen by $\tau$ before time $s$. In particular, the first selection by $\tau$ occurs at the time $\inf \{s\in[0,t]:(s,x)\in \Pi, x<\delta(t-s) \}$ (with the convention 
$\inf \varnothing =\infty$).

The rationale behind self-similar policies lies in the independence and symmetry properties of $\Pi$. Given that at time $s<t$ the last selected item is $z$,    
the future selections must be made from
the scatter $\Pi_{|[s,t]\times[z,1]}$, which is conditionally independent from  $\Pi_{|[0,s]\times[0,1]}$.  On the other hand, by a monotonic change of scales the scatter $\Pi_{|[s,t]\times[z,1]}$ can be transformed into a distributional 
copy of $\Pi_{|[0,(t-s)(1-z)]\times[0,1]}$,
hence starting from the state $(s,z)$ the maximum expected number of points selected after time $s$  is $u((t-s)(1-z))$. 
Scaling by  $1-z$ in (\ref{delta}) reduces  the uniform distribution on $[z,1]$  
(given the observation at time $s$ is bigger than $z$) to the uniform distribution on $[0,1]$.

We stress  that there are good suboptimal policies not in this class. 
For instance, a counterpart of the Samuels-Steele stationary policy, with selection criterion $0<x-z<\sqrt{2/t}$, yields
an increasing subsequence of expected length asymptotic to $\sqrt{2t}$, which is the best possible up to lower order terms.

The optimal value function $u$ is differentiable, increasing, concave and satisfies  the dynamic programming equation 
\begin{align} \label{3}
	u'(t) = \int_0^1 (u(t(1-x)) +1 - u(t))^{+} \mathrm{d}x
\end{align}
(where $y^+=\max(y,0)$)
with the initial condition $u(0)=0$,
see \cite{E, D}. 
 A closed form solution to (\ref{3}) is known only for $t\leq t_1$, when the optimal policy is `greedy',
that is selecting the chain of {\it records} from $\Pi$  
 (cf \cite{D} and  \cite{M}, Lemma 5.1). See \cite{D} for estimates on $u$.

Define $t_1$ as  the solution to $u(t_1)=1$. For the optimal policy $\tau^*$ the threshold function is 
$\delta^*(t)=1$ for $t\leq t_1$ and defined implicitly by the equation
$$
u(t(1-x))+1-u(t)=0~~~{\rm for~~}t>t_1.
$$

\par Our approach to the asymptotic analysis of (\ref{3}) hinges  on properties of the operator
\begin{align} \label{80}
	Jf(t):=\int_0^1 (f(t(1-x))+1-f(t))^{+} \mathrm{d}x, \ t\geq0,
\end{align}
which we consider acting on $C^{1}({\mathbb R}_+)$. It is easy to see that
\begin{enumerate} 
	\item[(i)] $Jf = J(f+c)$ for any constant $c$
	\item[(ii)] if, for some fixed $t$,  $f(s)-f(t) \leq g(s) - g(t)$ holds for $0<s\leq t$, then $Jf(t) \leq Jg(t)$.
\end{enumerate}
In terms of $J$ the optimality equation (\ref{3}) can be written as
\begin{align} \label{200}
	u'(t) = Ju(t), \ t\geq0.
\end{align}
By (i) and uniqueness, the general solution 
to (\ref{200}) is $u_c(t)=u(t) + c$, determined by the initial condition $u_c(0)=c$.

\par We will need  two elementary lemmas.
\begin{lemma} \label{2.1}
	Suppose $f\in C^{1}({\mathbb R}_+)$ satisfies $\underset{t \to \infty}\limsup f(t) = \infty$. 
Then there exists an arbitrarily large $x>0$, such that for some $t$
	\begin{itemize}
		\item[{\rm (a)}] $f(s)<f(t)=x$ for $0\leq s \leq t$,
		\item[{\rm (b)}] $f'(t)>0.$
	\end{itemize}
\end{lemma}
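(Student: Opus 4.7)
The plan is to produce the required $x$ values by inverting $f$ at the first crossing of each prescribed level, and then to upgrade the resulting derivative information from $f' \geq 0$ to $f' > 0$ by means of a one-dimensional Sard-type argument.

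First, for each $x > f(0)$ I would define the first hitting time
\[
t^*(x) := \inf\{s \geq 0 : f(s) \geq x\}.
\]
Since $f$ is continuous and $\limsup_{t\to\infty} f(t) = \infty$, this infimum is finite. Continuity of $f$ gives $f(t^*(x)) = x$, while the defining infimum forces $f(s) < x$ on $[0, t^*(x))$. Condition (a) is then immediate with the choice $t = t^*(x)$. Moreover, since $f(s) < f(t^*(x))$ for $s < t^*(x)$, the left difference quotient $(f(t^*(x)) - f(t^*(x) - h))/h$ is non-negative for all small $h > 0$, and $C^1$-regularity of $f$ yields $f'(t^*(x)) \geq 0$.

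The remaining task is to show that strict positivity $f'(t^*(x)) > 0$ holds for arbitrarily large $x$, and this is where the real content lies. I would argue by contradiction: if $f'(t^*(x)) = 0$ for every $x$ above some threshold $M$, then each such $x = f(t^*(x))$ would be a critical value of $f$. The one-dimensional Sard lemma — straightforwardly established for $C^1$ scalar functions by covering the zero set of $f'$ with short intervals on which $|f'|$ is uniformly small and invoking the mean value theorem to control $f$ on each piece — implies that the set of critical values of $f$ has Lebesgue measure zero. This contradicts the positive measure of $(M, \infty)$, so strict positivity must persist for arbitrarily large $x$, establishing (b).

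The main obstacle, and indeed the only non-routine step, is this promotion from weak to strict positivity of $f'(t^*(x))$: obtaining the candidate time $t^*(x)$ and the inequality $f'(t^*(x)) \geq 0$ is direct from the definition and the sign of the left difference quotient, whereas excluding the measure-zero exceptional set of "bad" levels requires the Sard-type input above.
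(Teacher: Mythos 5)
Your proof is correct, but it follows a genuinely different route from the paper's. The paper works with the running maximum $g(t)=\max_{s\in[0,t]}f(s)$: for each level $x>f(0)$ it considers the first time $l(x)$ and the last time $r(x)$ at which $g=x$, and argues that if $f'(l(x))\leq 0$ and $f'(r(x))\leq 0$ for all large $x$, then $g'$ vanishes for all large $t$, forcing $g$ (and hence $f$) to be bounded — a contradiction. You instead work only with the first hitting time $t^*(x)$ (which coincides with the paper's $l(x)$), obtain $f'(t^*(x))\geq 0$ from the one-sided difference quotient, and then upgrade to strict positivity for a full-measure set of levels by invoking the one-dimensional Sard lemma: the set of critical values of a $C^1$ function has Lebesgue measure zero, so the "bad" levels $x$ with $f'(t^*(x))=0$ cannot exhaust any ray $(M,\infty)$. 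Both approaches are sound. The paper's argument is more elementary and self-contained, relying only on monotonicity of the running maximum and the fact that a locally Lipschitz nondecreasing function with eventually vanishing derivative must stabilise; yours is shorter at the bookkeeping level (no need for $r(x)$ or the running maximum at all) at the cost of importing Sard's theorem, whose one-dimensional $C^1$ case you rightly note is itself a short mean-value-theorem covering argument. One small point worth stating explicitly in your write-up: $t^*(x)$ is attained because $\{s:f(s)\geq x\}$ is a nonempty closed set, which is what licenses $f(t^*(x))=x$; and $t^*(x)>0$ for $x>f(0)$, so the left difference quotient is well defined.
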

\begin{proof}
Let $g(t) = \underset{s\in [0,t]}\max f(s)$ be the running maximum. For $x > f(0)$ let 
\begin{align*}
l(x) = \min \{t: g(t) = x\} ,~~r(x) = \max \{t: g(t) = x\},
\end{align*}
which are well deffined because $g$ is nondecreasing and by the assumption satisfies $g(t)\to \infty$ as $t\to\infty$.
So $l(x) \leq r(x)$ and $f(r(x)) = f(l(x)) =x$. 
If neither $f'(l(x)) > 0$, nor $f'(r(x))>0$, then $g'(t) = 0$ for $l(x) \leq t \leq r(x)$. 
Now if the latter holds for all sufficiently large $x$, then $g'(t) = 0$ for all large enough $t$, 
but this is only possible if $f$ is bounded from the above, which is a contradition.
\end{proof}
The next  lemma enables one to compare solutions to (\ref{200}) with solutions of the analogous inequality.

\begin{lemma} \label{2.2}
Suppose $g\in C^1({\mathbb R}_+)$. If the function satisfies  $g'(t) > Jg(t)$ for all  sufficiently large $t$, 
then $\underset{t\geq 0}\sup (u(t)-g(t))<\infty$.  Likewise, if 
		 $g'(t) < Jg(t)$
	for all sufficiently large $t$, then $\underset{t\geq 0}\inf (u(t)-g(t))>-\infty$.
\end{lemma}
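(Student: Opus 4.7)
The plan is to argue by contradiction via Lemma \ref{2.1} applied to the difference $h := u - g$. For the first assertion, suppose $\sup_{t\geq 0}(u(t) - g(t)) = \infty$. Since $u$ and $g$ both lie in $C^1(\mathbb{R}_+)$, so does $h$, and $h$ is bounded on every compact interval $[0,T]$. Hence $\sup h = \infty$ forces $\limsup_{t \to \infty} h(t) = \infty$, and Lemma \ref{2.1} produces arbitrarily large $x > 0$ together with a corresponding $t$ satisfying $h(s) \leq h(t) = x$ for $0 \leq s \leq t$ and $h'(t) > 0$. The compact-interval boundedness of $h$ also ensures that, by choosing $x$ large, I can force $t$ past the threshold beyond which $g' > Jg$ is known to hold.

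At such a $t$, the inequality $u(s) - g(s) \leq u(t) - g(t)$ for $0 < s \leq t$ rearranges to $u(s) - u(t) \leq g(s) - g(t)$, so property (ii) of $J$, applied with $f = u$, yields $Ju(t) \leq Jg(t)$. Combining this with the optimality equation $u'(t) = Ju(t)$ and with $u'(t) > g'(t)$ (which is just $h'(t) > 0$), I obtain
\[
g'(t) < u'(t) = Ju(t) \leq Jg(t),
\]
contradicting $g'(t) > Jg(t)$. The second assertion is entirely symmetric: apply Lemma \ref{2.1} to $g - u$ and invoke property (ii) with $f = g$ to produce $Jg(t) \leq Ju(t) = u'(t) < g'(t)$ at the critical point, contradicting $g'(t) < Jg(t)$.

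The argument is short and I anticipate no serious obstacle; the real content has already been packaged into Lemma \ref{2.1} and into property (ii) of $J$. The only step that deserves a moment of care is checking that the "sufficiently large $t$" clause in the hypothesis is actually met by the $t$ supplied by Lemma \ref{2.1}, and this follows immediately from the observation that $h$ (resp.\ $-h$) is bounded on bounded intervals while $x$ in Lemma \ref{2.1} can be pushed arbitrarily high.
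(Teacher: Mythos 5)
Your proof is correct and follows essentially the same route as the paper: apply Lemma \ref{2.1} to the difference $u-g$, use the resulting maximal point $t$ together with the monotonicity of $J$ to contradict the hypothesis. The only cosmetic difference is that you apply property (ii) directly after rearranging $u(s)-u(t)\leq g(s)-g(t)$, whereas the paper routes through the shifted function $g_c=g+c$ and invokes property (i) as well; these are the same calculation.
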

\begin{proof} 
	Suppose $\underset{t \to \infty}\limsup \left( u(t)-g(t) \right) = \infty$. 
By Lemma \ref{2.1} there exists an arbitrarily large constant $c>0$ such that for some $t>0$
and  all $0\leq s \leq t$ we have
		$u(t)-g(t) = c$ ,	$u(s) - g(s) \leq c$  and
	\begin{align} \label{1}
		u'(t) - g'(t) > 0.
	\end{align}
	Choosing $c$ large we may achieve that $t$ is large enough to satisfy $g'(t) > Jg(t)$.
However, by properties (i) and (ii) of $J$ for $g_c:=g+c$
	\begin{align*}
		u'(t) = Ju(t) \leq Jg_c(t)= Jg(t) <g'(t),
	\end{align*}
	which contradicts (\ref{1}). Thus $u(t)-g(t)$ must be bounded from the above. 
The second part of the lemma is proved by an analogous argument.
\end{proof}

\subsection{Asymptotic expansion of the optimal value function}

To obtain asymptotic expansion 
we will compare $u$ with  different test functions. In the first instance  we will  derive the well known asymptotics 
$u(t)\sim \sqrt{2t}$, $t\to\infty$.
To that end, consider $u_0(t) = \alpha_0 \sqrt{t}$ with $\alpha_0>0$. For this and other test functions  we may ignore singularities  at or near the origin,  since in the calculations to follow we assume $t$ large enough, so $u_0(t)$ for small $t$ can be modified in some way
 to agree with $u_0\in C^1({\mathbb R}_+)$. Using monotonicity we can write
\begin{align} \label{4}
	Ju_0(t) = \int_0^1 (\alpha_0 \sqrt{t(1-x)} - \alpha_0 \sqrt{t} +1)^{+} \mathrm{d}x = \int_0^{\delta_0(t)} (\alpha_0 \sqrt{t(1-x)} - \alpha_0 \sqrt{t} +1)\ \mathrm{d}x
\end{align}
where 

\begin{equation}\label{48}
\delta_0(t)=  \frac{2}{\alpha_0 \sqrt{t}}-\frac{1}{\alpha_0^2 t}
\end{equation}
is the unique solution to 
$\alpha_0 \sqrt{t(1-x)} - \alpha_0 \sqrt{t} +1 = 0$
(we remind that $t$ is large enough, in particular $t>(4\alpha_0^2)^{-1}$ to enable solution).
Although  direct integration in (\ref{4}) is easy, it is more instructive to first expand the integrand using
$$\sqrt{1-x}-1=-\frac{1}{2}x+O(\delta_0^2),~~t\to\infty$$
where $\delta_0=\delta_0(t)$ for shorthand, and the estimate $O(\delta_0^2)$ is uniform in 
$0\leq x\leq \delta_0$. Now integrating and plugging (\ref{48})
\begin{align} \label{5}
	\begin{split}
		Ju_0(t) &= \int_0^{\delta_0(t)} \left(\alpha_0 \sqrt{t} \left(-\frac{x}{2} + O(\delta_0^2)\right)+1 \right)\mathrm{d}x \\ 
		&= \delta_0-\alpha_0\sqrt{t} \ \frac{\delta_0^2}{4} + \alpha_0 \sqrt{t} \ O(\delta_0^3) = \frac{1}{\alpha_0\sqrt{t}} + O(t^{-1}),  
	\end{split}
\end{align}
On the other hand, 
\begin{equation}\label{u0dir}
u_0'(t)=\frac{\alpha_0}{2\sqrt{t}}.
\end{equation}
The right-hand sides of (\ref{5}) and (\ref{u0dir}) match for $\alpha_0 = \sqrt{2}$. Thus, for $t$ large 
enough, 
\begin{align*}
	u_0'(t) > Ju_0(t) \qquad \text{for} \ \alpha_0>\sqrt{2}, \\
 	u_0'(t) < Ju_0(t) \qquad \text{for} \ \alpha_0<\sqrt{2}. 
\end{align*}
Applying Lemma \ref{2.2} we see that  
$\underset{t\to \infty}\limsup (u(t)-\alpha_0\sqrt{t}) < \infty $ 
hence $ \underset{t\to \infty}\limsup  (u(t)/\sqrt{t}) \leq \alpha_0$
for  $\alpha_0>\sqrt{2}$. It follows that
\begin{align} \label{6}
 \limsup_{t\to \infty}  \frac{u(t)}{\sqrt{t}} \leq \sqrt{2}.
\end{align}
A parallel argument with $\alpha_0<\sqrt{2}$ yields 
\begin{align} \label{7} 
\liminf_{t \to \infty} \frac{u(t)}{\sqrt{t}} \geq \sqrt{2}. 
\end{align}
Combining (\ref{6}) and (\ref{7}) we obtain  $u(t) \sim \sqrt{2t}$ as wanted.

\par To obtain finer asymptotics we will compare $u$ with test  functions of the form
\begin{align}\label{u1} 
	u_1(t) = \sqrt{2t} + \alpha_1 \log{t},
\end{align}
with $\alpha_1\in{\mathbb R}$.
Note that 
\begin{align} \label{10}
	u_1'(t)=\frac{1}{\sqrt{2t}}+\frac{\alpha_1}{t},
\end{align}
so $u_1$ is eventually increasing regardless  of $\alpha_1$.
We have
\begin{align} \label{8}
	Ju_1(t) = \int_0^{\delta_1(t)} (u_1(t(1-x))-u_1(t)+1) \ \mathrm{d}x,
\end{align}
where $\delta_1(t)$ is the solution to

\begin{equation}\label{9}
u_1(t(1-x)) - u_1(t) + 1 =0.
\end{equation}
Similarly to (\ref{48}) we obtain the expansion
\begin{align} \label{51}
	\delta_1(t) = \sqrt{\frac{2}{t}} - \frac{4\alpha_1+1}{2t} + O(t^{-{3/2}}), \qquad \text{~~ } t\to \infty.
\end{align}

We wish to expand  $Ju_1(t)$ up to a term of order  $o(t^{-1})$. The calculation is facilitated by observing that 
the term $O(t^{-1})$ 
in (\ref{51}) can be ignored, since it only contributes $O(t^{-\frac{3}{2}})$ to $Ju_1(t)$.
Indeed, keeping $t$ as parameter, let us view the integral (\ref{8}) as a function of the upper limit
$$I(\delta):=\int_0^\delta (u_1(t(1-x))+1-u_1(t)){\rm d}x.$$
In view of (9)    $\delta_1:=\delta_1(t)$ is a stationary point of the integral.
Expanding at $\delta_1$ with remainder  we get for some $\gamma\in [0,1]$
$$I(\delta_1+\epsilon)-I(\delta_1)=I'(\delta_1)\epsilon + I''(\delta_1+\gamma\epsilon)\frac{\epsilon^2}{2}=
0 - t u_1'(t(1-(\delta_1+\gamma\epsilon))\frac{\epsilon^2}{2}.
$$
Now letting $t\to\infty$ and   $\epsilon=O(t^{-1})$ from (\ref{10}) we obtain
$$I(\delta_1(t)+\epsilon)-I(\delta_1(t)) =O(t^{-3/2}),$$
as claimed.

\par Retaining the leading term in (\ref{51}) and calculating 
 $I(\sqrt{2/t})$, 
 (\ref{8}) becomes
\begin{align} \label{13}
	Ju_1(t) = \frac{1}{\sqrt{2t}}-\frac{1}{t}\left(\alpha_1+\frac{1}{6}\right) + O(t^{-\frac{3}{2}}).
\end{align}
The right-hand sides of (\ref{10}) and (\ref{13}) match if
\begin{equation*}
	\alpha_1 = - \left(\alpha_1+\frac{1}{6}\right),
\end{equation*}
that is  for $\alpha_1 = -1/12$. For $\alpha_1 \neq -1/12$, for large $t$ the relation 
between $u_1'(t) $ and $Ju_1(t)$ has the same direction as the relation between $\alpha_1$ and $-1/12$.
Appealing to Lemma 2.2 again, we conclude that $u(t) - (\sqrt{2t}+\alpha_1 \log{t})$ 
is bounded from  above for  $\alpha_1>-1/12$ and  bounded from  below  for $\alpha_1<-1/12$.
Letting $\alpha_1$ approach $-1/12$ we obtain
$$\lim_{t\to\infty} \frac{u(t)-\sqrt{2t}}{\log{t}} =-\frac{1}{12},$$
whence the asymptotic expansion

\begin{equation}\label{as-exp2}
u(t) \sim \sqrt{2t} - \frac{1}{12} \log{t}.
\end{equation}

We need one more iteration to bound the remainder in (\ref{as-exp2}). This time we consider the test functions
\begin{align}\label{u2}
	u_2(t) = \sqrt{2t} - \frac{1}{12} \log{t} + \frac{\alpha_2}{\sqrt{t}}
\end{align}
with $\alpha_2\in{\mathbb R}$.     Solving $u_2(t(1-x)) - u_2(t) + 1 = 0$ for $x=\delta_2(t)$ we obtain regardless of 
$\alpha_2$
\begin{align} \label{52}
	\delta_2(t) = \sqrt{\frac{2}{t}} - \frac{1}{3t} + O(t^{-\frac{3}{2}}),
\end{align}
which is just (\ref{51}) with $\alpha_1=-1/12$. With account of the second term in (\ref{52}) we calculate

\begin{align} \label{17}
	Ju_2(t)=\int_0^{\delta_2(t)} (u_2(t(1-x))-u_2(t)+1) \ \mathrm{d}x=
\frac{1}{\sqrt{2t}} - \frac{1}{12t} +  \left(\frac{\alpha_2}{2}-\frac{\sqrt{2}}{144}\right) \frac{1}{t^{3/2}} + O(t^{-2}).
\end{align}
To match with 
\begin{align}
u_2'(t) = \frac{1}{\sqrt{2t}}-\frac{1}{12t}-\frac{\alpha_2}{2t^{3/2}}.
\end{align}
we choose $\alpha_2={\sqrt{2}}/{144}$, and repeating the above argument we conclude that 
$\underset{t\to\infty}\limsup |u(t)-u_2(t)|<\infty$. Absorbing the last term in (\ref{u2}) into $O(1)$ we arrive 
at the following result.

\begin{theorem} \label{49} The optimal value function has asymptotic expansion
\begin{align}\label{as-exp-u}
u(t) = \sqrt{2t} -\frac{1}{12} \log{t} + O(1), ~~~ t \to \infty.
\end{align} 
\end{theorem}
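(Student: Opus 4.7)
The plan is to carry out one more iteration of the sandwich argument already used to derive the $-\tfrac{1}{12}\log t$ correction, now with a test function that captures the next order. Concretely, I would work with the one‑parameter family
\begin{equation*}
u_2(t) = \sqrt{2t} - \tfrac{1}{12}\log t + \frac{\alpha_2}{\sqrt{t}}, \qquad \alpha_2 \in \mathbb{R},
\end{equation*}
and compare $u_2'(t)$ with $Ju_2(t)$ via Lemma \ref{2.2}. Since the third term $\alpha_2/\sqrt{t}$ is $o(1)$, a two‑sided bound on $u - u_2$ for some admissible $\alpha_2$ immediately yields $u(t) = \sqrt{2t} - \tfrac{1}{12}\log t + O(1)$.

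The execution proceeds in four steps. First, solve $u_2(t(1-x)) - u_2(t) + 1 = 0$ for the critical threshold $\delta_2(t)$, which has an expansion of the form $\sqrt{2/t} - 1/(3t) + O(t^{-3/2})$ independent of $\alpha_2$ at leading orders. Second, compute $Ju_2(t) = \int_0^{\delta_2(t)}(u_2(t(1-x)) - u_2(t) + 1)\,\mathrm{d}x$ by Taylor‑expanding the integrand through the relevant order; here I would reuse the stationarity observation $I'(\delta_2) = 0$ already exploited for $u_1$, which allows one to integrate only the leading behaviour of $\delta_2$ while incurring an error of $O(t^{-3/2})$ in the expansion of $Ju_2$, small enough to be harmless. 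Third, compare the resulting expansion of $Ju_2(t)$ with $u_2'(t) = 1/\sqrt{2t} - 1/(12t) - \alpha_2/(2t^{3/2})$; by construction the leading two terms agree, and matching the $t^{-3/2}$ coefficients pins down a unique critical value $\alpha_2^\ast = \sqrt{2}/144$. Fourth, perturb: for $\alpha_2 > \alpha_2^\ast$ one obtains $u_2'(t) > Ju_2(t)$ eventually, and Lemma \ref{2.2} gives $\sup_t (u(t) - u_2(t)) < \infty$; for $\alpha_2 < \alpha_2^\ast$ the opposite inequality yields the matching lower bound. Absorbing the vanishing term $\alpha_2/\sqrt{t}$ into an $O(1)$ remainder concludes the proof.

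The main obstacle is purely computational: one must expand $Ju_2(t)$ to genuine $O(t^{-2})$ accuracy, which requires carrying the Taylor expansion of $\sqrt{1-x}$ and of $\log(1-x)$ beyond the quadratic term. The precise constant $\sqrt{2}/144$ arises only from a delicate cancellation between the $O(\delta_2^3)$ contribution from the square‑root part and the $O(\delta_2^2)$ contribution from the logarithmic correction, so one must be scrupulous about which terms in $\delta_2(t)$ contribute at each order. The stationarity argument is what makes this tractable, and I would rely on it to avoid expanding $\delta_2$ past its leading $\sqrt{2/t}$ term inside the integrand while still tracking the $1/(3t)$ correction wherever it enters through the upper limit of integration.
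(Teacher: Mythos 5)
Your proposal is exactly the approach the paper takes: iterate the comparison argument with the test family $u_2(t) = \sqrt{2t} - \tfrac{1}{12}\log t + \alpha_2/\sqrt t$, use the stationarity of the integral at $\delta_2(t)$ to avoid expanding the threshold past its leading term, match the $t^{-3/2}$ coefficients to locate the critical $\alpha_2^\ast = \sqrt 2/144$, and close the sandwich via Lemma~\ref{2.2} before absorbing the $o(1)$ correction into the $O(1)$ remainder. One small sign slip: because $\alpha_2/\sqrt t$ contributes $-\alpha_2/(2t^{3/2})$ to $u_2'(t)$, one has $u_2'(t) - Ju_2(t) = (\sqrt 2/144 - \alpha_2)\,t^{-3/2} + O(t^{-2})$, so it is $\alpha_2 < \alpha_2^\ast$ that yields $u_2' > Ju_2$ eventually (hence the upper bound) and $\alpha_2 > \alpha_2^\ast$ that yields the lower bound — the reverse of what you wrote — but this does not affect the two‑sided conclusion or the final result.
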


It is natural to conjecture that the $O(1)$ term in (\ref{as-exp-u}) has a limit. 
However, our method cannot capture constants since we nowhere used the initial condition $u(0)=0$.
We also believe that the described steps and further iteration yield, in fact, an asymptotic expansion of the 
{\it derivative} $u'$. See \cite{D} for non-asymptotic estimates of $u$ and its derivatives.

\subsection{A self-similar policy}

The threshold function $\hat{\delta}(t):=\min(\sqrt{2/t}, 1),~t>0,$ defines a self-similar policy via (\ref{delta}). Let $\hat{u}(t)$ be the expected length of subsequence 
selected by this policy in the problem with horizon $t\geq 0$.   A counterpart of  
(\ref{3}) is the integro-differential equation 
$$\hat{u}'(t)=\hat{J}\hat{u}(t),~~~\hat{u}(0)=0,$$
 where
\begin{equation}
\hat{J}f(t): = \int_0^{\hat{\delta}(t)} (f(t(1-x))+1 -f(t)) \mathrm{d}x.
\end{equation}
The operator $\hat{J}$ also has the shift and monotonicity properties (i), (ii), therefore 
the analogue of Lemma \ref{2.2} applies to $\hat{J}$. Comparing $\hat{u}$ with the same functions as above we arrive at 
the asymptotics
$$\hat{u}(t)=\sqrt{2t}-\frac{1}{12}\log t+O(1),$$ 
which taken together with (\ref{as-exp-u}) implies that 
$$\sup |\hat{u}(t)-u(t)|<\infty.$$

More generally, a policy with threshold function ${\delta}(t)=\min(\alpha t^{-1/2}, 1),~ \alpha>0,$ selects a subsequence with expected length asymptotic to $ 4\alpha (2+\alpha^2)^{-1}\,\sqrt{t}$, where
the maximum rate is achieved for $\alpha=\sqrt{2}$

\section{The discrete-time problem}

\subsection{Asymptotic expansion of the value function}
We turn now to the asymptotics of $v_n$, the optimal expected length in the problem with fixed sample size $n$. Arlotto et al (see \cite{B},  Corollary 9)    used concavity of  $(v_n)_{n\in {\mathbb N}}$ to show that $u(n)\leq v_n$.
This implies that the right-hand side of (\ref{as-exp-u}) is an asymptotic lower bound for $v_n$. We could not find, however, a de-poissonisation argument to construct a tough upper bound, 
hence will proceed  by analogy with the Poisson problem  via  a direct analysis of the dynamic programming equation.

For $z\in [0,1]$, 
let $v_n(z)$ be the maximum expected length of increasing subsequence which can be achieved with a policy  never selecting items smaller than $z$.
In particular, $v_n(0)=v_n$.
It is easy to see that  $v_k(z)$ (for any $n\geq k$)  is  the expected length of increasing subsequence which will be selected under the optimal policy  when $k$ items remain to be seen and    the last item selected so far 
  is $z$. In such situation the number of remaining items above $z$ has binomial distribution with mean $k(1-z)$. 
The optimality equation  is now a recursion \cite{B, A,  F} 
\begin{align}\label{22}
	v_k(z) = z  \ v_{k-1}(z) + \int_z^1 \max \left\{v_{k-1}(x)+1, v_{k-1}(z) \right\} \mathrm{d}x,~~~k\in {\mathbb N},
\end{align}
with $v_0(z) = 0$ and $v_1(z) = 1-z$.
Note that $v_k(z)+c$ also satisfies (\ref{22}) for any constant $c$.

\par  Next is an analogue of Lemma \ref{2.2} for the fixed-$n$ problem.
\begin{lemma} \label{23}
	Let $f_k: [0,1]\to {\mathbb R}_+, ~k\in {\mathbb N}$, be a sequence of continuous functions which satisfy
	\begin{align} \label{24}
		f_k(z) > z \ f_{k-1}(z) + \int_z^1 \max \left\{f_{k-1}(x)+1, f_{k-1}(z) \right\} \mathrm{d}x
	\end{align}
	provided $k(1-z)$ is large enough. Then the difference $v_k(z) - f_k(z)$ is uniformly bounded from  above for all $k$ and $z$. 
	Similarly, if 
	\begin{align} 
		f_k(z) < z \ f_{k-1}(z) + \int_z^1 \max \left\{f_{k-1}(x)+1, f_{k-1}(z) \right\} \mathrm{d}x
	\end{align}
	for $k(1-z)$ large enough, then the difference $v_k(z) - f_k(z)$ is uniformly bounded from  below for all $k$ and $z$. 
\end{lemma}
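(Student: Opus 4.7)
My plan is to transcribe the Lemma \ref{2.2} argument into the discrete recursion, using $(k, z)$-pairs in place of the time variable $t$ and the nonlinear operator
\[
Tg(z) := z\,g(z) + \int_z^1 \max\{g(x)+1,\, g(z)\}\,\mathrm{d}x
\]
in place of $J$. The first step is to verify, exactly as for $J$, two analogous properties: shift-invariance $T(g+c) = Tg + c$ for every constant $c$, and monotonicity in the sense that $g_1(y) \leq g_2(y)$ on $[z,1]$ implies $Tg_1(z) \leq Tg_2(z)$. In this notation the recursion (\ref{22}) reads $v_k = T v_{k-1}$, while hypothesis (\ref{24}) asserts $f_k(z) > Tf_{k-1}(z)$ whenever $k(1-z) \geq K_0$ for some threshold $K_0$.

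Set $h_k(z) := v_k(z) - f_k(z)$ and argue by contradiction, assuming $h$ is not bounded from above. Where the continuous proof invoked Lemma \ref{2.1} to find a time at which $u-g$ reaches a new maximum with positive derivative, in the discrete setting I will simply pick, for any $c > K_0$, the \emph{smallest} index $k^{*} = k^{*}(c)$ for which $h_{k^{*}}(z^{*}) > c$ holds at some $z^{*} \in [0,1]$; by minimality, $h_j(y) \leq c$ for every $j < k^{*}$ and $y \in [0,1]$. To guarantee that hypothesis (\ref{24}) is available at $(k^{*}, z^{*})$, I will invoke the elementary a priori bound $v_k(z) \leq k(1-z)$, valid because an increasing sequence drawn from items above $z$ cannot be longer than their count; together with $f_k \geq 0$ this gives $h_k(z) \leq k(1-z)$, forcing $k^{*}(1-z^{*}) > c > K_0$.

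The contradiction then follows by applying $T$ to the inequality $v_{k^{*}-1}(y) \leq f_{k^{*}-1}(y) + c$, which holds on $[z^{*},1]$ by the minimality property. Using shift-invariance and monotonicity,
\[
v_{k^{*}}(z^{*}) = Tv_{k^{*}-1}(z^{*}) \leq T(f_{k^{*}-1}+c)(z^{*}) = Tf_{k^{*}-1}(z^{*}) + c < f_{k^{*}}(z^{*}) + c,
\]
the final strict inequality being hypothesis (\ref{24}) at $(k^{*}, z^{*})$. Thus $h_{k^{*}}(z^{*}) < c$, contradicting its choice, and the upper bound follows. The lower bound is obtained by the mirror argument: assume $\inf_{k,z} h_k(z) = -\infty$, pick the smallest $k^{*}$ at which $h_{k^{*}}(z^{*}) < -c$, and run the same chain with reversed directions, getting $v_{k^{*}}(z^{*}) \geq Tf_{k^{*}-1}(z^{*}) - c > f_{k^{*}}(z^{*}) - c$.

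The main obstacle I anticipate is not the monotonicity/shift machinery, which transfers verbatim from $J$ to $T$, but the replacement of the Lemma \ref{2.1}-based ``new maximum with positive derivative'' device by the ``smallest bad index'' selection: one must pair this selection with the a priori control $v_k(z) \leq k(1-z)$ (and, for the lower-bound half, a corresponding bound on $f_k$ over the compact region $\{k(1-z) \leq K_0\}$, which will be immediate for the concrete test functions used in the next subsection) in order to land the offending pair $(k^{*}, z^{*})$ squarely in the regime $k(1-z) \geq K_0$ where the strict hypothesis bites.
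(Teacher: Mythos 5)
Your proof is correct and mirrors the paper's argument almost exactly: the paper also reduces to the a priori bound $v_k(z)\leq k(1-z)$, picks the minimal index $k_0$ at which the gap first exceeds $c$, and then derives the contradiction by adding $c$ to both sides of the recursion and using the shift and monotonicity of the operator. The only cosmetic difference is that the paper chooses (via continuity in $z$, using $v_{k_0}(1)=0$) a point $z_0$ where the gap \emph{equals} $c$ rather than merely exceeds it; your strict-inequality version works just as well, and your closing remark that the lower-bound half additionally needs control on $f_k$ over the region $\{k(1-z)\leq K_0\}$ is a legitimate subtlety the paper treats silently by declaring the second part ``analogous.''
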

\begin{proof}
We will prove only the first part of the lemma, the second being analogous.
	Assume the contrary, i.e.  that  (\ref{24}) holds but
$$\underset{k(1-z)\to\infty}\limsup  (v_k(z) - f_k(z))=\infty.$$ 
Then for each $c>0$ there exist $k_0$ and $z_0$ such that $v_{k_0}(z_0) \geq f_{k_0}(z_0) + c$, and we choose the minimal such $k_0$. 
Since $v_{k_0}(1)=0<f_{k_0}(1)+c$,  by continuity we may choose $z_0\in [0,1)$ to have the equality
	\begin{align} \label{26}
		v_{k_0}(z_0) = f_{k_0}(z_0) + c.
	\end{align} 
Using the obvious upper estimate  $v_k(z)\leq k(1-z)$ we see that
   $k_0(1-z_0) \to \infty$ as  $c \to \infty$. 
Hence, for $c$ large  the inequality (\ref{24}) holds with $k=k_0, z=z_0$ and adding the constant to both sides we obtain
	\begin{align} \label{25}
		f_{k_0}(z_0) + c > z_0 ( f_{k_0-1}(z_0)+c) + \int_{z_0}^1 \max \left\{(f_{k_0-1}(x)+c)+1, (f_{k_0-1}(z_0)+c) \right\} \mathrm{d}x.
	\end{align}
On the other hand, from the optimality recursion and  the choice  of $k_0$ we also have
	\begin{align} \label{800}
		\begin{split}
			v_{k_0}(z_0) &=  z_0  \ v_{k_0-1}(z_0) + \int_{z_0}^1 \max \left\{v_{k_0-1}(x)+1, v_{k_0-1}(z_0) \right\} \mathrm{d}x \\ & < z_0 (f_{k_0-1}(z_0)+c) + \int_{z_0}^1 \max \left\{(f_{k_0-1}(x)+c)+1, (f_{k_0-1}(z_0)+c)\right\} 
\mathrm{d}x. 
		\end{split}
	\end{align}
	However, (\ref{25}) and (\ref{800})   cannot hold  together with  (\ref{26}), which is a contradiction.
\end{proof}

We will apply the lemma to compare $v_k(z)$ with suitable test functions. Given a sequence of functions $f_k(z):[0,1]\to{\mathbb R}_+,~k\in{\mathbb N}$, introduce operators  
$$\Delta f_{k}(z) := f_{k+1}(z) - f_{k}(z), ~~Gf_k(z) := \int_z^1 (f_{k}(x)+1 - f_{k}(z))^{+} \mathrm{d}x.$$
With this notation, the optimality  equation (\ref{22}) assumes the form
\begin{align} \label{28}
	\Delta v_k(z) = Gv_k(z).
\end{align}
By Lemma \ref{23}, if  for $k(1-z)$ large enough
$\Delta f_{k}(z) > Gf_k(z)$, then the difference $v_k(z) - f_k(z)$ is  bounded from  above uniformly in  $k$ and $z$; likewise
 if $\Delta f_{k}(z) < Gf_k(z)$ then  $v_k(z) - f_k(z)$ is bounded from  below.

To obtain the principal asymptotics 
consider the test function $v^{(0)}_k(s): = \gamma_0 \sqrt{k(1-z)}$  where $\gamma_0>0$ is a parameter.
Introducing  $\hat{k}:=k(1-z)$ and expanding for large $k$ we obtain
\begin{align} \label{29}
	\Delta v^{(0)}_k(z) = \gamma_0 \frac{1-z}{2\sqrt{\hat{k}}}+O\left( \frac{1}{k^{3/2}}\right).
\end{align} 
Observe that, unlike the Poisson problem, the expansion is  not in terms of the expected  number    
of future observations $\hat{k}$. This happens because $\Delta$ is the forward difference in the varible $k$ rather than $\hat{k}$.
Furthermore,

using the change of variable $y:={(x-z)}/{(1-z)}$, we can write the integral as
\begin{align} \label{32}
	Gv^{(0)}_k(z) = (1-z) \int_0^1 \left(\gamma_0 \sqrt{\hat{k}-\hat{k}y} - \gamma_0 \sqrt{\hat{k}} + 1\right)^+\mathrm{d}y,
= (1-z) \int_0^{h_0(\hat{k})}\left(\gamma_0 \sqrt{\hat{k}-\hat{k}y} - \gamma_0 \sqrt{\hat{k}} + 1\right)\mathrm{d}y,
\end{align}
where $h_0(\hat{k})$ is the solution to
\begin{align} \label{30}
	\gamma_0 \sqrt{\hat{k}-x} - \gamma_0 \sqrt{\hat{k}} + 1= 0.
\end{align} 
For $ \hat{k} \to \infty$ we have
\begin{align} \label{31}
	h_0(\hat{k}) = \frac{2}{\gamma_0\sqrt{\hat{k}}}+O(\hat{k}^{-1}). 
\end{align}

Expanding the integrand in  (\ref{32}) yields
\begin{align*} 
	Gv^{(0)}_k(z) = (1-z) \int_0^{h_0(\hat{k})} \left(1 -\gamma_0 \sqrt{\hat{k}} \,\frac{y}{2} + O((h_0(\hat{k})^2) \right) \mathrm{d}y,
\end{align*}
hence integrating  and using (\ref{31}) 
\begin{align} \label{34}
	Gv^{(0)}_k(z) \sim \frac{1-z}{\gamma_0\sqrt{\hat{k}}}\,, ~~~\hat{k}\to\infty.
\end{align}

The match between (\ref{29}) and (\ref{34}) occurs for $\gamma_0 = \sqrt{2}$. Therefore, 
 applying Lemma \ref{24} and mimicking the argument in Section 3.2  we conclude that
\begin{align} \label{101}
	v_k(z) \sim \sqrt{2k(1-z)}, ~~~
\end{align}
as $k(1-z) \to \infty$. This can be viewed as the maximum expected length of increasing subsequence chosen from $N$ items, with binomially distributed $N$  (see \cite{H} p. 945 and 
\cite{F} p. 1083).

\par For better  approximation we consider the test function $v^{(1)}_k(z) = \sqrt{2k(1-z)} + \gamma_1 \log{(k(1-z))}$ with $\gamma_1\in{\mathbb R}$.  The forward difference becomes

$$\Delta v^{(1)}_k(z)= \sqrt{2\hat{k}} \left(\left(1+\frac{1}{{k}}\right)^{1/2} - 1\right) + \gamma_1 \log{\left(1 + \frac{1}{{k}}\right)}.$$

Using  Taylor expansion with remainder  yields    
\begin{align}\label{35}
	\Delta v^{(1)}_k(z) = \frac{1-z}{\sqrt{2\hat{k}}} + \gamma_1\frac{1-z}{\hat{k}} + O({k}^{-{3}/{2}}), ~~~k\to\infty.
\end{align}
On the other hand,  using substitution $y ={(x-z)}/{(1-z)}$
\begin{align}\label{36}
	\begin{split}
		Gv^{(1)}_k(z) &= \int_z^{1} \left( \sqrt{2k(1-x)} - \sqrt{2k(1-z)} + \gamma_1 \log{\left(k(1-x)\right)} - \gamma_1 \log{\left(k(1-z)\right)} +1  \right)^{+} \mathrm{d}x \\ &=
(1-z) \int_0^{1} \left( \sqrt{2\hat{k}}\left((1-y)^{1/2}-1\right) + \gamma_1 \log{(1-y)} +1  \right)^{+} \mathrm{d}y\\& = (1-z) \int_0^{h_1(\hat{k})} \left( \sqrt{2\hat{k}}\left((1-y)^{1/2}-1\right) + \gamma_1 \log{\left(1-y\right)} +1  \right) \mathrm{d}y, \nonumber
	\end{split}
\end{align}
where $h_1(\hat{k})$ solves
\begin{equation*}
	\sqrt{2\hat{k}}\left((1-y)^{1/2}-1\right) + \gamma_1 \log{\left(1-y\right)} +1 = 0.
\end{equation*}
For $\hat{k}\to\infty$
\begin{align} \label{37}
	h_1(\hat{k}) = \sqrt{\frac{2}{\hat{k}}} - \left(\frac{1}{2} + 2\gamma_1 \right) \frac{1}{\hat{k}} + O(\hat{k}^{-{3}/{2}}).
\end{align}
Hence integrating and expanding 
\begin{align} \label{38}
	Gv^{(1)}_k(z) = \frac{1-z}{\sqrt{2\hat{k}}} - \left(\gamma_1 +\frac{1}{6} \right) \frac{1}{\hat{k}} + O(\hat{k}^{-{3/2}}),
\end{align}	
where actually only the first term in (\ref{37}) was needed for calculation.
Expansions (\ref{35}) and (\ref{38}) match for $\gamma_1=-\frac{1}{12}$. Thus, another application of Lemma \ref{24} gives us 
\begin{align} \label{39}
	v_k(z) \sim \sqrt{2k(1-z)} - \frac{1}{12} \log{(k(1-z))}\,,~~~k(1-z)\to\infty.
\end{align}

We need one more iteration to bound the remainder.
 Consider the test functions 
\begin{align*}
v^{(2)}_k(z) = \sqrt{2k(1-z)} - \frac{1}{12} \log{(k(1-z))} + \gamma_2 \frac{1}{\sqrt{k(1-z)}}\,,~ ~\gamma_2\in{\mathbb R}.
\end{align*}
For $k\to\infty$ we obtain the expansion for the difference
\begin{align} \label{40}
	\Delta v^{(2)}_k(z) \sim \frac{1-z}{\sqrt{2\hat{k}}} - \frac{1-z}{12\hat{k}} + \frac{1-z}{\hat{k}^{{3}/{2}}} \left(-\frac{\gamma_2}{2} - \frac{1-z}{8} \right),
\end{align}
uniformly in $z\in[0,1)$, and with some more effort for the integral
\begin{align} \label{42} 
	Gv^{(2)}_k(z) \sim \frac{1-z}{\sqrt{2\hat{k}}} - \frac{1-z}{12\hat{k}}+ \frac{1}{\hat{k}^{3/2}} \left(\frac{\gamma_2}{2} - \frac{\sqrt{2}}{144}\right),~~ \hat{k}\to\infty.
\end{align}
Since $z\in[0,1)$, we have
\begin{align} \label{45}
	-\frac{\gamma_2}{2} - \frac{1}{8} \leq -\frac{\gamma_2}{2} - \frac{1-z}{8} < -\frac{\gamma_2}{2}.
\end{align}
Appealing to  (\ref{40}), (\ref{42}) and the first inequality in  (\ref{45}), we conclude that, for large $k(1-z)$,
\begin{align*}
	\Delta v^{(2)}_k(z) > Gv^{(2)}_k(z) \ \ \ \text{for ~} \gamma_2 \leq \frac{\sqrt{2}-18}{144},
\end{align*}
hence, by Lemma \ref{23},  $v_k(z) - v^{(2)}_k(z)$ for such $\gamma_2$ is bounded from  above. 
On the other hand, exploiting the second inequality in (\ref{45}), we derive that for large $k(1-z)$
\begin{align*} 
	\Delta v^{(2)}_k(z) < Gv^{(2)}_k(z) \ \ \ \text{for ~} \gamma_2 \geq \frac{\sqrt{2}}{144}.
\end{align*}
thus  by the lemma  $v_k(z) - v^{(2)}_k(z)$ for such $\gamma_2$ is bounded from  below. 

It follows readily that
\begin{align} \label{44} 
	v_k(z) =  \sqrt{2k(1-z)} - \frac{1}{12} \log{(k(1-z))} + O(1), ~~~k(1-z) \to \infty.
\end{align}
Our main result is the special case $z=0$:
\begin{theorem}\label{90}
The maximum expected length 
	satisfies
	\begin{align*}
	v_n = \sqrt{2n} - \frac{1}{12} \log{n} + O(1), ~~n\to\infty.
	\end{align*}
\end{theorem}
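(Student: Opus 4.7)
The plan is to reduce Theorem \ref{90} to the stronger uniform statement (\ref{44}), namely $v_k(z) = \sqrt{2k(1-z)} - \frac{1}{12}\log(k(1-z)) + O(1)$ as $k(1-z)\to\infty$, uniformly in $z\in[0,1)$. Since $v_n = v_n(0)$, the theorem then follows by specialising $z=0$, which turns $k(1-z)$ into $n$. The whole task therefore reduces to proving (\ref{44}), and my approach mirrors the Poisson development of Section 2: exploit the compact form $\Delta v_k(z) = G v_k(z)$ of the recursion (\ref{22}), and use Lemma \ref{23} as a comparison principle to turn strict inequalities between $\Delta f_k$ and $G f_k$ into one-sided bounds for $v_k - f_k$.

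I would iterate the comparison with three successive test functions. First, take $v_k^{(0)}(z) = \gamma_0\sqrt{k(1-z)}$; after substitution $y=(x-z)/(1-z)$ in the integral, matching the asymptotics of $\Delta v_k^{(0)}$ and $Gv_k^{(0)}$ at leading order pins down $\gamma_0=\sqrt{2}$ and gives $v_k(z)\sim\sqrt{2k(1-z)}$. Second, refine with $v_k^{(1)}(z) = \sqrt{2k(1-z)} + \gamma_1\log(k(1-z))$; the same matching procedure at the next order yields $\gamma_1=-1/12$. Third, to control the remainder, compare with $v_k^{(2)}(z) = \sqrt{2k(1-z)} - \frac{1}{12}\log(k(1-z)) + \gamma_2/\sqrt{k(1-z)}$ and try to sandwich the $O(1)$ error between two admissible choices of $\gamma_2$.

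The main obstacle sits in the third iteration, and it is specific to the discrete problem. In the Poisson case both $u'$ and $Ju$ live naturally in the single variable $t$, and the matching at each order determines a unique coefficient. Here, however, $\Delta$ is a forward difference in $k$ while $G$ is an integral naturally parametrised by the scaled variable $\hat{k}=k(1-z)$. Expanding $(1+1/k)^{1/2}$ in $\Delta v_k^{(2)}$ introduces an extra factor $1-z$ at the $\hat{k}^{-3/2}$ level, so one finds a term like $-\gamma_2/2 - (1-z)/8$ on the $\Delta$ side against the $z$-free expression $\gamma_2/2 - \sqrt 2/144$ on the $G$ side, and no single $\gamma_2$ can equate them uniformly in $z$. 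The fix is the sandwich $-\gamma_2/2 - 1/8 \le -\gamma_2/2 - (1-z)/8 < -\gamma_2/2$ used in (\ref{45}): picking $\gamma_2 \le (\sqrt 2 - 18)/144$ forces $\Delta v_k^{(2)} > Gv_k^{(2)}$, hence $v_k - v_k^{(2)}$ is bounded above by Lemma \ref{23}; picking $\gamma_2 \ge \sqrt 2/144$ flips the inequality and gives a matching lower bound.

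Combining the two sandwich inequalities yields (\ref{44}), uniformly in $z\in[0,1)$ as $k(1-z)\to\infty$. Setting $z=0$ gives the theorem. Two minor technicalities I would flag along the way: (i) the test functions should be extended (or the comparison restricted to $k(1-z)$ large) so that they lie in the class required by Lemma \ref{23}, exactly as the author does when absorbing near-origin behaviour into $O(1)$; and (ii) all $O(\cdot)$ estimates for $\Delta v_k^{(j)}$ and $Gv_k^{(j)}$ need to be checked to be uniform in $z$, since Lemma \ref{23} demands the triggering inequalities to hold once $k(1-z)$ is uniformly large, not merely pointwise in $z$.
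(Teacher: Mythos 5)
Your proposal follows the paper's own proof essentially step for step: rewriting the recursion as $\Delta v_k = Gv_k$, invoking Lemma \ref{23} as the comparison principle, iterating through the same three test functions $v_k^{(0)}, v_k^{(1)}, v_k^{(2)}$ with the same coefficients $\gamma_0=\sqrt{2}$, $\gamma_1=-1/12$, and resolving the $z$-dependence at order $\hat{k}^{-3/2}$ via the sandwich (\ref{45}) with the thresholds $\gamma_2\le(\sqrt{2}-18)/144$ and $\gamma_2\ge\sqrt{2}/144$, then specialising to $z=0$. You correctly identify the one genuinely non-trivial point where the discrete case departs from the Poisson case (the extra $1-z$ factor from the forward difference preventing an exact match), so the argument is complete and matches the paper's.
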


Comparing with Theorem \ref{49} we see that the Poisson and fixed-$n$ problems are asymptotically similar in a very strong sense:

$$\sup |v_n-u(n)|<\infty.$$

\subsection{A state-dependent policy}

Suppose $z$ is the last selection so far and $x\in[0,1]$ the $k$th-to-last item. Standardising the variables, the acceptance criterion for the policy from Arlotto et al \cite{A} is 
\begin{equation}\label{ArlottoPol}
0<\frac{x-z}{1-z}\leq \sqrt{\frac{2}{k(1-z)}}.
\end{equation}
The analogy with self-similar policy from Section 2.3 must be obvious.

More generally, for $k\in{\mathbb N}$ let  $h_k(z):[0,1]\to[0,1]$ be threshold functions which define a policy via the acceptance criterion
$$
0<\frac{x-z}{1-z}\leq h_k(z).
$$
The corresponding value function satisfies the recursion
$$\hat{v}_{k+1}(z) - \hat{v}_k(z) = \int_z^{z+(1-z){h}_k(z)} \left(\hat{v}_{k}(x)-\hat{v}_{k}(z)+1\right)\mathrm{d}x.$$
Analysis of this equation for the policy  (\ref{ArlottoPol})   is completely analogous to that of (\ref{28}), leading to the same asymptotics as in (\ref{44})
 \begin{align} 
	\hat{v}_k(z) = \sqrt{2k(1-z)} - \frac{1}{12} \log{k(1-z)} + O(1).
\end{align}
Taken together with (\ref{44})
this settles the conjecture in \cite{A} that the policy (\ref{ArlottoPol}) is within a constant from optimality uniformly in $n$.

\bibliographystyle{amsplain}

\end{document}